\def\newmathbf#1{\expandafter\newcommand\csname #1\endcsname{\mathbf{#1}}}
\def\newmathit#1{\expandafter\newcommand\csname #1\endcsname{\mathit{#1}}}
\def\newmathrm#1{\expandafter\newcommand\csname #1\endcsname{\mathrm{#1}}}
\def\newmathsf#1{\expandafter\newcommand\csname #1\endcsname{\mathsf{#1}}}
\def\newmathtt#1{\expandafter\newcommand\csname #1\endcsname{\mathtt{#1}}}
\def\newmathbb#1{\expandafter\newcommand\csname #1\endcsname{\mathbb{#1}}}
\def\newmathcal#1{\expandafter\newcommand\csname #1\endcsname{\mathcal{#1}}}
\def\newmathscr#1{\expandafter\newcommand\csname #1\endcsname{\mathscr{#1}}}
\def\newmathfrak#1{\expandafter\newcommand\csname #1\endcsname{\mathfrak{#1}}}
\def\newmathop#1{\expandafter\newcommand\csname #1\endcsname{\operatorname{#1}}}
\def\newmathoplim#1{\expandafter\newcommand\csname #1\endcsname{\operatorname*{#1}}}
\def\newmathbin#1{\expandafter\newcommand\csname #1\endcsname{\mathbin{\mathrm{#1}}}}
\def\newmathrel#1{\expandafter\newcommand\csname #1\endcsname{\mathrel{\mathrm{#1}}}}
\newcommand*\field[1]{\mathbb{#1}} 
\newcommand*\normal{\lhd}
\def\con#1=#2(#3){#1\equiv#2\pmod{#3}}
\def\myo{\ifmmode\circ\else$^\circ$\xspace\fi}  \let\o=\myo
\def\opri{\ifmmode^{\circ\prime}\else$^{\circ\prime}$\xspace\fi}
\newcommand*\Setst[2]%
\newcommand*\gen[1]{\langle #1 \rangle}
\newcommand*\Genst[2]%
\newcommand*\Z{{\field Z}}
\newcommand*\F{{\field F}}
\theoremstyle{plain} 
  \newtheorem{lemma}{Lemma}
    \newtheorem{lemma}{Lemma}[section]
\newtheorem{theorem}[lemma]{Theorem}
\newtheorem{fact}[lemma]{Fact}
\newtheorem*{rawnamedtheorem}{\therawnamedtheorem}
\newcommand{\therawnamedtheorem}{\error}
\newenvironment{namedtheorem}[1]{\renewcommand{\therawnamedtheorem}{#1}
   \begin{rawnamedtheorem}}
  {\end{rawnamedtheorem}}
\newtheorem{Claim}[lemma]{Claim}
\newenvironment{verification}[1][Verification]%
   {\begin{proof}[#1]}{\end{proof}}
\def\unexpandableprotect{\@unexpandable@protect}
\def\writeout#1#2{%
\begingroup%
\let\\=\relax%
\let\protect=\unexpandableprotect%
\immediate\write#1{#2}%
\endgroup%
}
\def\leqs{\leqslant}
\def\geqs{\geqslant}
\title{Uniqueness Cases in Odd Type Groups of Finite Morley Rank, Revisited}
\author{Alexandre V.\ Borovik%
\\
School of Mathematics, The University of Manchester\\
Oxford Road, Manchester M13 9PL, England\\
{\tt alexandre.borovik >>>at<<< gmail.com}
\and
Jeffrey Burdges\\
School of Mathematics and Statistics\\
University of St Andrews\\
Mathematical Institute\\
North Haugh, St Andrews KY16 9SS Scotland\\
{\tt burdges >>>at<<< gmail.com}
\and
Ali Nesin\\
Mathematics Department,  Istanbul Bilgi University\\
Ku\c{s}tepe \c{S}i\c{s}li, Istanbul, Turkey\\
{\tt anesin >>>at<<< bilgi.edu.tr}
}
\date{25 November 2011}
\begin{document}

\maketitle

\begin{abstract}
The paper contains versions of the Strong Embedding Theorem  and the Uniqueness Subgroup Theorem for groups of finite Morley rank and odd type which are needed for the study of permutations actions and modules in the finite Morley rank category.

\footnotetext{Mathematics Classification Numbers 03C60, 20G99}
\end{abstract}

\section{Introduction}\label{sec:Intro}

This paper relates to the programme of study of linear and permutation actions of finite Morley rank started by Borovik and Cherlin in \cite{borche} and continued in \cite{deloro}. The paper contributes important general structural results, the Strong Embedding Theorem  and the Uniqueness Subgroup Theorem stated below, to the forthcoming paper by Berkman and Borovik \cite{ABAB} about the pseudoreflection actions of groups of finite Morley rank.

\begin{namedtheorem}{Strong Embedding Theorem}
Let $G$ be a simple group of finite Morley rank and odd type with
Pr\"{u}fer\/ $2$-rank $\geqslant 3$.  Let\/ $S$ be a Sylow
$2$-subgroup of $G$ and $T=S^\circ$ the maximal $2$-torus in $S$.
Assume in addition that every proper definable subgroup of\/ $G$ containing $T$ is a $K$-group.

Then if\/ $G$ has a proper $2$-generated core
$M = \Gamma_{S,2}(G) < G$ then $M$ is strongly embedded.

\end{namedtheorem}

The Strong Embedding Theorem is a variation on the theme of an earlier version of a similar result under the same name \cite{bbn}.

\begin{namedtheorem}{Uniqueness Subgroup Theorem} Let $G$ be a simple group of finite Morley rank and odd type with
Pr\"{u}fer $2$-rank $\geqslant 3$.  Let $S$ be a Sylow
$2$-subgroup of\/ $G$ and $D=S^\circ$ the maximal\/ $2$-torus in $S$. Let further $E$ be the subgroup of\/ $D$ generated by involutions and $\theta$ a nilpotent signaliser functor on $G$.

Assume in addition that every proper definable subgroup of\/ $G$ containing $T$ is a $K$-group.

Then $M = N_G(\theta(E))$ is a  strongly embedded subgroup in $G$.

\end{namedtheorem}

An early version of the Uniqueness Subgroup Theorem was published as Theorem 6.7 in \cite{Bo95}.

The notions of both $2$-generated core and strongly embedded subgroup
arise as so-called {\em uniqueness cases} in finite group theory.
These subgroups both exhibit a black hole property reminiscent of a normal
subgroup; strong embedding, however, is far more powerful and has global consequences which deeply affect the structure of the group.

The standard references for our basic facts and terminology are \cite{BN} and \cite{abc}.
Here we give definitions immediately involved in the statement of the Strong Embedding Theorem.

A group $G$ of finite Morley rank is {\em connected} if it contains no
proper definable subgroup of finite index; the connected component $G^\circ$ of $G$ is the maximal connected subgroup of $G$. The  definable closure $d(X)$ of a subset $X \subseteq G$ is a minimal definable subgroup containing $X$. If $H \leqslant G$ is a na arbitrary (not necessarily definable) subgroup, we set $H^\circ = H \cap d(H)^\circ$; this is a subgroup of finite index in $H$.

We define the {\em $2$-rank} $m_2(G)$ of a group $G$ to be the maximum
rank of its elementary abelian 2-subgroups.  Also, the {\em Pr\"{u}fer $2$-rank}
$\pr_2(G)$ is the maximum 2-rank of its Pr\"{u}fer $2$-subgroups $\Z_{2^\infty}^k$.
These ranks must all be finite for subgroups of an odd type group of finite Morley rank.

Let $G$ be a group of finite Morley rank and let $S$ be a Sylow
2-subgroup of $G$.
We define the {\em $2$-generated core} $\Gamma_{S,2}(G)$ of $G$ to be
the definable hull of the group generated by all normalizers $N_G(U)$
of all elementary abelian 2-subgroups $U \leqs S$ with $m_2(U) \geqs 2$.

A proper definable subgroup $M$ of a group $G$ of finite Morley rank
is said to be {\em strongly embedded} if $M$ contains an involution but subgroups
$M \cap M^g$  do not contain involutions for any $g\in G \backslash M$.

Our next group of definition concerns signaliser functors.
For any involution $s \in G$, let $\theta(s) \leqslant  O(C_G(s))$ be some
connected definable  normal subgroup of $C_G(s)$. We say that $\theta$
is a \emph{signalizer functor} if, for any commuting involutions $t,s \in G$,
$$ \theta(t) \cap C_G(s) = \theta(s) \cap C_G(t). $$ The signalizer
functor $\theta$ is {\em complete\/}   if for any elementary abelian subgroup $E \leqslant  G$ of order at least
$8$ the subgroup $$ \theta(E) = \langle \theta(t) \mid t \in E^\#
\rangle $$ is a connected subgroup without $2$-torsion and
 $$
C_{\theta(E)}(s) = \theta(s) $$
for any $s \in E^\#$.
Finally, a signalizer functor $\theta$ is {\em non-trivial\/}  if
$\theta(s) \ne 1$ for some involution $s \in G$, and {\em nilpotent\/}
if all the subgroups $\theta(t)$ are nilpotent.

\section{Sylow Theory}\label{sec:Sylow}

\begin{fact}[\cite{BP}; Lemma 10.8 of \cite{BN}]
Let $S$ be a Sylow $2$-subgroup of a group of finite Morley rank.  Then
$S^\o = B * T$ is a central product of a definable connected nilpotent
subgroup $B$ of bounded exponent and of a $2$-torus $T$, i.e.~$T$ is a
divisible abelian $2$-group.
\end{fact}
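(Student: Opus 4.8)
The plan is to reduce the statement to the structure theory of connected nilpotent groups of finite Morley rank. First I would record that, since $S$ is a maximal $2$-subgroup, it is locally finite and nilpotent-by-finite, so that $S^{\circ}$ is a definable, connected, \emph{nilpotent} $2$-group; this nilpotence is the substantive external input I would import from the Sylow theory of Borovik and Poizat \cite{BP}, as reproving it is well beyond a sketch. Every element of $S^{\circ}$ then has $2$-power order, so $S^{\circ}$ is a torsion group. Applying the structure theorem for connected nilpotent groups of finite Morley rank (see \cite{BN}), I would write $S^{\circ}=B*D$ as a central product of two definable connected characteristic subgroups, with $B$ of bounded exponent and $D$ divisible. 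Here $B$ is already the desired factor: a connected nilpotent $2$-group of bounded exponent. It remains to identify the divisible part $D$ with a $2$-torus.

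The heart of the argument is to prove that $D$, a connected divisible nilpotent torsion $2$-group, is abelian; granting this, $D$ is a divisible abelian $2$-group of finite Pr\"ufer rank, i.e.\ a $2$-torus $T$, and since $[B,D]=1$ with $D$ abelian we get $D=T\leqslant Z(S^{\circ})$, which is exactly the central product $S^{\circ}=B*T$ asserted. To see that $D$ is abelian I would induct on the nilpotency class. The quotient $D/Z(D)^{\circ}$ is again connected, divisible, torsion, and of strictly smaller class, so by the inductive hypothesis it is abelian; hence $D'\leqslant Z(D)^{\circ}$ and $D$ has class at most $2$. In class $2$ the commutator induces a bi-additive map $D\times D\to D'$, and because $D$ is connected and divisible each partial map $[\,\cdot\,,y]\colon D\to D'$ has divisible image; as $D'$ is generated by these images it is itself a divisible torsion $2$-group.

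The decisive point, and the step I expect to be the main obstacle, is then that there are no nonzero bi-additive maps $D\times D\to D'$ between divisible torsion $2$-groups: such a map corresponds to a homomorphism from the torsion group $D$ into $\operatorname{Hom}(D,D')$, and the latter is a torsion-free $\Z_{2}$-module since $\operatorname{End}(\Z_{2^{\infty}})\cong\Z_{2}$, so the map vanishes and $D'=1$. Equivalently, once one knows the torsion of the divisible part is abelian, its centrality in the connected group $S^{\circ}$ can be obtained from the rigidity of tori. Throughout I would lean on the standard finite-Morley-rank bookkeeping, namely definability of $S^{\circ}$ and of the divisible part, connectedness of commutator subgroups of connected groups, and finiteness of Pr\"ufer ranks, to guarantee that $B$ and $D=T$ are definable and that $T$ has finite rank.
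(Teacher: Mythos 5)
This statement is quoted as a \emph{Fact} from Borovik--Poizat and from Lemma~10.8 of Borovik--Nesin; the paper gives no proof of it, so there is nothing in-paper to compare your argument against. Your sketch does follow the standard route of the cited sources: nilpotence of $S^{\circ}$ from the Sylow theory, the central-product decomposition of a nilpotent group of finite Morley rank into a bounded-exponent part and a divisible part, and centrality of the torsion in the divisible part; your key step --- killing the commutator via a bi-additive map into $\operatorname{Hom}$ of divisible torsion $2$-groups, which is torsion-free --- is correct and is essentially how abelianness of $T$ is obtained. Two technical repairs are needed. First, $S^{\circ}$ is not definable in general (a $2$-torus is countable, hence not definable in an uncountable model), so the structure theorem for nilpotent groups of finite Morley rank must be applied to the definable hull $d(S)^{\circ}$, with $B$ and $T$ then recovered as the sets of $2$-elements of its two factors; note also that the divisible factor $D$ of $d(S)^{\circ}$ need not itself be torsion, so your argument should be phrased as showing that the \emph{torsion elements} of a divisible nilpotent group are central, rather than assuming $D$ is a torsion group. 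Second, in your induction on nilpotency class, passing to $D/Z(D)^{\circ}$ does not obviously lower the class; quotient by $Z(D)$ instead, which still gives a connected divisible quotient. With these adjustments the sketch is sound.
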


A group is said to have {\em odd type} if $B = 1$ and $T \neq 1$.
This notion is well-defined because groups of finite Morley rank
have a good theory of Sylow 2-subgroups:

\begin{fact}[Theorem 10.11 of \cite{BN}]\label{Sylow}
The Sylow $2$-subgroups of a group of finite Morley rank are conjugate.
\end{fact}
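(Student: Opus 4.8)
The plan is to reduce the conjugacy of full Sylow $2$-subgroups to the conjugacy of their maximal $2$-tori, and then to finish with a normalizer (Frattini-type) argument. By the structure Fact above, for any Sylow $2$-subgroup $S$ the connected component factors as $S^{\circ}=B*T$, where $T$ is a maximal $2$-torus. Since $T$ is the maximal divisible subgroup of the connected nilpotent group $S^{\circ}$, it is characteristic in $S^{\circ}$; as $S^{\circ}$ is in turn characteristic in $S$, we get $S\leqs N_G(T)$. Hence, once I know that the maximal $2$-tori occurring in any two Sylow $2$-subgroups $S_1,S_2$ are $G$-conjugate, I may replace $S_1$ by a conjugate so that $T_1=T_2=T$, and then both $S_1$ and $S_2$ live inside the single definable subgroup $N_G(T)$, in which $T$ is normal.

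The crux is the conjugacy of maximal $2$-tori, which I would establish by a genericity / rank-counting argument exploiting the \emph{rigidity} of divisible abelian $2$-groups. Fix a maximal $2$-torus $T$ and set $C=C_G(T)^{\circ}$ and $N=N_G(T)$. Rigidity yields that $T$ is the unique maximal $2$-torus of $C$ and that the Weyl group $N/C$ is finite. The key estimate is that distinct conjugates of $T$ intersect in a set of strictly smaller rank (again by rigidity together with the descending chain condition on definable subgroups), so the union of conjugates satisfies the rank identity $\rk\bigl(\bigcup_{g\in G}T^{g}\bigr)=\rk T+\rk(G/N)$. Comparing this against the rank of the definable set of semisimple $2$-elements carried by an arbitrary second maximal $2$-torus shows that two non-conjugate classes would over-count, forcing the two tori to be conjugate. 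This is the step I expect to be the \textbf{main obstacle}: it needs the full strength of torus rigidity and a careful genericity computation rather than any formal manipulation, and it is here that the real depth of the theorem lies.

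Finally, assuming $T_1=T_2=T$, I work inside the definable subgroup $\ov{N}=N_G(T)$, which contains both $S_1$ and $S_2$ and in which $T$ is normal. Passing to the section $\ov{N}/T$ (equivalently, reducing modulo $C_G(T)$ and then working inside $C_G(T)$), the images of $S_1$ and $S_2$ are Sylow $2$-subgroups of a group of strictly smaller rank, to which the theorem applies by induction; what remains to be matched up is the bounded-exponent factor $B$ and the finite quotient $S_i/S_i^{\circ}$. In the odd type case treated in this paper $B=1$ and $S_i^{\circ}=T$, so this last part reduces to conjugating the finite $2$-groups sitting above $T$, which is controlled by the finiteness of the Weyl group $N/C$ and ordinary Sylow conjugacy in an appropriate finite section. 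Reassembling the conjugating elements produces $h\in G$ with $S_1^{h}=S_2$, as required.
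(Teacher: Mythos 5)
This statement appears in the paper only as a quoted \emph{Fact}, attributed to Theorem 10.11 of \cite{BN}; the paper offers no proof of it, so the only meaningful comparison is with the source. Your opening reduction (conjugate the maximal $2$-tori, then work inside $N_G(T)$) is a reasonable skeleton, but the central step --- conjugacy of maximal $2$-tori via a genericity computation --- is exactly where the argument breaks, and you have left it unproved. A maximal $2$-torus $T$ is a countable, locally finite and in general non-definable subgroup, so $\rk T = 0$ and the proposed identity $\rk\bigl(\bigcup_{g} T^{g}\bigr) = \rk T + \rk(G/N)$ cannot drive a genericity argument: the union of conjugates of $T$ is nowhere near generic, and replacing $T$ by $d(T)$ or $C^\circ_G(T)$ does not rescue the ``distinct conjugates intersect in smaller rank'' estimate, which is false in general (distinct conjugates of $C^\circ_G(T)$ can share definable subgroups of large rank). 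Conjugacy of maximal tori by genericity is a genuine but much later theorem (Cherlin's work on good tori), proved by a considerably more delicate analysis; and in \cite{BN} the conjugacy of maximal $2$-tori is deduced \emph{from} the Sylow theorem rather than used to prove it, so importing it here risks circularity.

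The actual proof of Theorem 10.11 in \cite{BN} runs along entirely different lines: induction on the Morley rank of $G$, a choice of two Sylow $2$-subgroups whose intersection is maximal in a suitable sense, and, in the critical small-intersection configuration, the classical analysis of the subgroup $d(ij)$ generated by two involutions $i$ and $j$ (either $i$ and $j$ are conjugate in $d(ij)$, or $d(ij)$ contains an involution centralizing both); it is this dihedral-type argument, not torus genericity, that links distinct Sylow $2$-subgroups. Your closing reduction also has secondary gaps: the Fact is stated for arbitrary groups of finite Morley rank, so the case $T = 1$ (bounded-exponent Sylow $2$-subgroups) must be handled and your torus argument says nothing about it; the induction through $N_G(T)$ collapses when $T \normal G$; and the assertion that the images of $S_1$ and $S_2$ in $\ov{N}/T$ are Sylow $2$-subgroups of the quotient itself requires proof and is obtained in \cite{BN} as a corollary of conjugacy. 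In short, the proposal sketches a plausible outline, but its main step is not merely difficult --- as formulated it is incorrect --- and the remaining steps do not close.
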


We will find the following  corollary, known as a ``Frattini
argument'', to be useful.

\begin{fact}[Corollary 10.12 of \cite{BN}]\label{Sylow_frattini}
Let $G$ be a group of finite Morley rank, let $N \normal G$ be a
definable subgroup, and let $S$ be a characteristic subgroup of the
Sylow 2-subgroup of $N$.  Then $G = N_G(S) N$.
\end{fact}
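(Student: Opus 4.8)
The plan is to run the classical Frattini argument, with the needed Sylow conjugacy supplied in the finite Morley rank category by Fact~\ref{Sylow}. Write $P$ for the Sylow $2$-subgroup of $N$ of which $S$ is a characteristic subgroup. The key point is that, because $N$ is normal in $G$, conjugation by any $g \in G$ permutes the Sylow $2$-subgroups of $N$; these are all conjugate inside $N$, so one may correct $g$ by an element of $N$ to land in $N_G(P)$, which in turn normalizes $S$ precisely because $S$ is characteristic in $P$.

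Concretely, I would fix an arbitrary $g \in G$. Since $N \normal G$ we have $P^g \leqslant N^g = N$, and as conjugation by $g$ is an isomorphism $P \to P^g$, the image $P^g$ is again a Sylow $2$-subgroup of $N$. By Fact~\ref{Sylow} there is some $n \in N$ with $P^g = P^n$. Setting $h = g n^{-1}$ gives $P^h = P$, so $h \in N_G(P)$. Because $S$ is characteristic in $P$, conjugation by $h$ stabilizes $S$, whence $N_G(P) \leqslant N_G(S)$ and in particular $h \in N_G(S)$. Thus $g = h n \in N_G(S)\,N$, and since $g$ was arbitrary we conclude $G = N_G(S)\,N$, the reverse inclusion being immediate.

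No step presents a genuine obstacle: the argument is purely group-theoretic and rests only on the conjugacy of Sylow $2$-subgroups rather than on any counting, so it transfers verbatim from the finite case once Fact~\ref{Sylow} is in hand. The one point deserving a word of caution is that $P$ itself need not be definable in the finite Morley rank setting, but this is irrelevant here, since the Frattini argument invokes only the conjugacy statement and never the definability of $P$.
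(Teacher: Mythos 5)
Your argument is correct and is exactly the classical Frattini argument that the cited source (Corollary 10.12 of \cite{BN}) uses; the paper itself quotes this statement as a Fact without reproducing a proof. The only ingredient beyond finite group theory is the conjugacy of Sylow $2$-subgroups of $N$ (supplied by Fact~\ref{Sylow}, applied to the definable subgroup $N$), and your closing remark that definability of $P$ is never needed is accurate.
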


The key result describing conjugation patterns of $2$-elements in a group of finite Morley rank is the following theorem by Burdges and Cherlin.

\begin{fact}[Theorem 3 of \cite{bc}]\label{torsion}
Let $G$ be a connected group of finite Morley rank,
and $a$ any $2$-element of $G$ such that $C_G(a)$ is of odd type. Then $a$ belongs to a $2$-torus.
\end{fact}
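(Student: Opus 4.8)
\emph{Proof strategy.} The plan is to reduce the statement to a question about a single maximal $2$-torus of $G$, and then to use conjugacy and rigidity of tori together with the odd-type hypothesis to place $a$ inside it. First I would set $C = C_G(a)$ and record two elementary facts: every element of $C$ commutes with $a$, so $a \in Z(C)$; and since $a$ is a $2$-element, it lies in some Sylow $2$-subgroup $S$ of $C$. By hypothesis $C$ is of odd type, so by the structure theorem for Sylow $2$-subgroups (the central product $S^\circ = B * T$ with $B = 1$) the group $T := S^\circ$ is a nontrivial $2$-torus. As $T \leqslant C$ and $a \in Z(C)$, the element $a$ centralizes $T$, so $\langle a \rangle T$ is an abelian $2$-group whose connected divisible part contains $T$. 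The whole statement therefore amounts to showing that $a$ lies in the divisible part of an abelian $2$-subgroup, i.e.\ that $a$ is \emph{toral}. The natural framework is an induction on $\rk G$ through a minimal counterexample.

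Next I would enlarge $T$ to a maximal $2$-torus $\widehat{T}$ of $G$ and transport the problem into $C_G(\widehat{T})$. The inputs here are the standard theory of decent tori: conjugacy of maximal $2$-tori, connectedness of $C_G(\widehat{T})$, and the rigidity statement that $N_G(\widehat{T})/C_G(\widehat{T})$ is finite. Using conjugacy of Sylow $2$-subgroups (Fact~\ref{Sylow}) inside $C$ together with the Frattini argument (Fact~\ref{Sylow_frattini}) applied to $N_G(\widehat{T})$, I would arrange that $a$ normalizes $\widehat{T}$ with $C_{\widehat{T}}(a)^\circ = T$, so that $a$ induces a $2$-element $\bar a$ of the finite Weyl group $W = N_G(\widehat{T})/C_G(\widehat{T})$. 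The goal then becomes to show $\bar a = 1$, i.e.\ $a \in C_G(\widehat{T})$. I note that having no nontrivial $2$-unipotent part is inherited by definable subgroups, so the inductive hypothesis is available whenever the problem descends to a proper definable subgroup, such as $C_G(\widehat{T})$ when $\widehat{T} \not\leqslant Z(G)$, or modulo a central $2$-torus when $\widehat{T} \leqslant Z(G)$.

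The heart of the argument, and the step I expect to be the main obstacle, is ruling out that $a$ acts as a genuinely non-toral $2$-element on $\widehat{T}$: the danger is exactly that $\bar a \in W$ is a nontrivial $2$-element with no toral representative, inverting or permuting part of $\widehat{T}$ and hence having small connected fixed points. This is where the odd-type hypothesis must be used decisively. Since $C = C_G(a)$ contains the nontrivial $2$-torus $T = C_{\widehat{T}}(a)^\circ$ and has no $2$-unipotent part, a fixed-point and connectedness analysis of the $W$-action on $\widehat{T}$ should force $a$ to centralize all of $\widehat{T}$. By contrast, the typical exotic non-toral element, such as an involution inverting a $2$-torus, has finite fixed points and therefore a centralizer with trivial $2$-torus, violating odd type; this is precisely the configuration the hypothesis is designed to exclude.

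Once $a \in C_G(\widehat{T})$, it remains to show $a \in \widehat{T}$. Here $C_G(\widehat{T})$ is connected with $\widehat{T}$ a central maximal $2$-torus and $a$ a central $2$-element of it; since $\widehat{T}$ is maximal in $G$ there is no larger torus to absorb $a$, so the claim is equivalent to the original statement for this central configuration and should be dispatched by the minimal-counterexample induction. I expect the genuine difficulty to be concentrated in the previous paragraph: controlling the interaction between the component structure of $C_G(a)$ and the $W$-action on $\widehat{T}$, and extracting from ``no $2$-unipotent part'' enough rigidity to trivialize $\bar a$. A plausible auxiliary reduction is to first settle the case where $a$ is an involution and then bootstrap to arbitrary $2$-elements by descending along the powers of $a$.
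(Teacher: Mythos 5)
This statement is not proved in the paper at all: it is imported verbatim as a black box, namely Theorem~3 of Burdges--Cherlin \cite{bc}, so there is no internal proof to measure your attempt against. Judged on its own terms, your proposal is a reduction scheme rather than a proof, and the reduction stops exactly at the point where the theorem's actual content lives. After arranging (plausibly, via the Frattini argument of Fact~\ref{Sylow_frattini} applied inside $\langle a\rangle C_G^\circ(T)$) that $a$ normalizes a maximal $2$-torus $\widehat{T}\geqslant T$, you declare that ``a fixed-point and connectedness analysis of the $W$-action on $\widehat{T}$ should force $a$ to centralize all of $\widehat{T}$'' and yourself flag this as the expected obstacle. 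But showing $\bar a=1$ in $N_G(\widehat{T})/C_G(\widehat{T})$ under these hypotheses is not a routine fixed-point computation --- it is essentially equivalent to the theorem being proved. The only evidence you offer is the heuristic that a non-toral $2$-element should invert a torus and hence have finite connected fixed points, contradicting odd type; this disposes of one extreme configuration but says nothing about a hypothetical $a$ with $C_{\widehat{T}}(a)^\circ$ infinite and proper in $\widehat{T}$, which is precisely the case that must be excluded and for which no argument is given.

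For comparison, the actual proof in \cite{bc} does not proceed by a Weyl-group analysis on a single maximal $2$-torus. It runs through the theory of $p$-unipotent subgroups and decent tori, and its engine is a genericity argument: the union of the conjugates of $C_G^\circ(T)$ for $T$ a maximal decent torus is generic in $G$, and this, combined with structural results on torsion in connected nilpotent and solvable groups, is what ultimately places the $2$-element in a $2$-torus. None of that machinery appears in your sketch, and the induction you set up (descending to $C_G(\widehat{T})$, or to $G/\widehat{T}$ when $\widehat{T}$ is central) only becomes available \emph{after} the missing step is carried out. So the proposal correctly identifies the right objects and the right target configuration, but the theorem itself is left unproved.
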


\begin{fact}[Lemma 4.5 of \cite{Bo95}] If $S$ is a $2$-subgroup in a group of finite Morley rank and odd type
with $\pr_2(S) \geqslant  3$, then
for any two four-subgroups\/ {\rm (}i.e.\ elementary abelian
subgroups of order {\rm 4}{\rm )} $U, V \leqslant  S$ there is a sequence of
four-subgroups
$$
U = V_1, V_2, \dots, V_n = V,
$$
such that $[V_i, V_{i+1}] =1$ for all $i = 1,2, \dots, n-1$.
 \label{lm:2-connected}
\end{fact}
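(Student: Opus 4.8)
The plan is to show that every four-subgroup of $S$ can be joined by a short commuting chain to a four-subgroup lying inside the elementary abelian group $E := \Omega_1(T)$ of involutions of the maximal $2$-torus $T := S^\circ$, after which the conclusion is immediate because $E$ is abelian. By the Sylow structure fact and the definition of odd type, $T$ is a $2$-torus (every connected $2$-subgroup is a torus in odd type), necessarily of Pr\"ufer rank $\pr_2(S)=\pr_2(T)=k\geqslant 3$, so $E\cong(\Z/2\Z)^k$ is normal in $S$ and every four-subgroup contained in $T$ already lies in $E$. Since $E$ is abelian, any two of its four-subgroups commute elementwise, so the four-subgroups of $E$ form a clique in the commuting graph; it therefore suffices to connect an arbitrary four-subgroup $U$ to $E$.

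The key linear-algebra input is the following. Each involution $t\in S$ normalises $T$, hence acts $\F_2$-linearly on $E$ by an automorphism $\sigma$ with $\sigma^2=1$. Over $\F_2$ one has $(\sigma-1)^2=0$, so $\operatorname{im}(\sigma-1)\subseteq\ker(\sigma-1)=C_E(t)$, and rank--nullity gives $\dim_{\F_2}C_E(t)\geqslant\lceil k/2\rceil\geqslant 2$. Thus every involution of $S$ centralises a four-subgroup of $E$; this is the only place where the hypothesis $\pr_2(S)\geqslant 3$ is used.

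I would then argue by cases on $\abs{U\cap T}$ for a four-subgroup $U=\langle a,b\rangle$. If $U\leqslant T$ then $U\leqslant E$ and there is nothing to do. If $\abs{U\cap T}=2$, say $a\in E$ and $b\notin T$, then $a\in C_E(b)$ and $\dim_{\F_2}C_E(b)\geqslant 2$, so I can extend $a$ to a four-subgroup $W=\langle a,c\rangle\leqslant C_E(b)$; since $a,c$ lie in the abelian $E$ and $c$ is centralised by $b$, both $a$ and $b$ lie in $C_S(W)$, whence $[U,W]=1$ with $W\leqslant E$. Finally, if $U\cap T=1$, I use that $U$ is a $2$-group acting on the nonzero $\F_2$-space $E$, so $C_E(U)\neq 1$ by the standard fixed-point count for a $p$-group acting on an $\F_p$-module; choosing $1\neq c\in C_E(U)$ and any $a\in U^\#$, the four-subgroup $U_1=\langle a,c\rangle$ satisfies $U\leqslant C_S(U_1)$, hence $[U,U_1]=1$, while $\abs{U_1\cap T}\geqslant 2$, reducing to the previous case. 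Chaining these steps connects $U$ to $E$ in at most two moves, and then through the $E$-clique to any prescribed $V$.

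The main obstacle is the case $U\cap T=1$, where $U$ has no involution inside the torus to serve as a hinge: the argument must manufacture such a hinge from the fixed points $C_E(U)$, and one has to verify that the auxiliary four-subgroups produced genuinely commute with the originals \emph{elementwise} rather than merely normalising them. Everything ultimately rests on the two $\geqslant 2$ dimension bounds established above, which is precisely why the Pr\"ufer rank hypothesis cannot be relaxed below $3$.
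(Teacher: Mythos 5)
The paper gives no proof of this statement --- it is imported wholesale as Lemma~4.5 of \cite{Bo95} --- so there is nothing in-house to compare against. Judged on its own, your argument is correct and reconstructs what is essentially the standard proof: reduce to a normal elementary abelian subgroup $E$ of rank $\geqslant 3$ coming from the maximal $2$-torus, observe that the four-subgroups of $E$ form a clique under elementwise commutation, and hinge an arbitrary four-subgroup onto $E$ via the bound $\dim_{\F_2} C_E(t)\geqslant\lceil k/2\rceil\geqslant 2$ for a single involution (the same Jordan-form computation the paper uses in Claim~\ref{stremb_E1}), plus fixed-point counting for the case $U\cap T=1$. All three cases, including the elementwise (not merely setwise) commutation you rightly flag, check out.

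The one point to patch is at the very start: the Borovik--Poizat decomposition is quoted in this paper only for \emph{Sylow} $2$-subgroups, whereas the $S$ in the statement is an arbitrary $2$-subgroup, so the assertion that $T=S^\circ$ is a $2$-torus with $\pr_2(T)=\pr_2(S)$ is not literally covered by the quoted facts. The cheapest repair is to embed $S$ in a Sylow $2$-subgroup $P$ and take $E=\Omega_1(S\cap P^\circ)$: any Pr\"ufer $2$-subgroup $A\leqslant S$ is divisible, so $A\leqslant d(A)=d(A)^\circ\leqslant d(P)^\circ$ and hence $A\leqslant P\cap d(P)^\circ=P^\circ$, which shows that $E$ is elementary abelian of rank $\geqslant 3$ and normal in $S$ (as $P^\circ\normaleq P$); your case analysis then runs verbatim on $\abs{U\cap E}$ in place of $\abs{U\cap T}$.
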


If conclusions of fact~\ref{lm:2-connected} hold in a $2$-group $S$, we say that $S$ is \emph{$2$-connected}.

\section{The Generation Principle for $K$-groups}\label{sec:Kgrp}

A group $G$ of finite Morley rank is called a {\em $K$-group} if every non-trivial
connected definable simple section of $G$ is a Chevalley group over an
algebraically closed field. In particular, nilpotent and solvable groups of finite Morley rank are $K$-groups.

A {\em Klein four-group}, or just {\em four-group} for short, is a
group isomorphic to $\Z/2\Z \oplus \Z/2\Z$.  We will use the notation
$H^\# = H \setminus \{1\}$ to denote the set of non-identity elements
of a group $H$.

We need the following generation principle for $K$-groups:

\begin{fact}[Theorem 5.14 of \cite{Bo95}]\label{Kgrp_gen}
Let $G$ be a connected $K$-group of finite Morley rank and odd type.
Let $V$ be a four-subgroup acting definably on $G$.  Then
$$ G = \langle C^\o_G(v) \mid v\in V^\# \rangle $$
\end{fact}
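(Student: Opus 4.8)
The plan is to show that the definable subgroup $H := \langle C^{\circ}_G(v) \mid v \in V^{\#} \rangle$ equals $G$. First I would record the two structural facts I will use repeatedly: $H$ is \emph{definable and connected}, being generated by definable connected subgroups (the standard corollary of Zilber's indecomposability theorem), and $H$ is $V$-invariant. The latter holds because $V$ is abelian: for $v,w \in V$, if $v$ fixes $g$ then $v(w(g)) = w(v(g)) = w(g)$, so $w$ stabilises $C_G(v)$ setwise and hence preserves $C^{\circ}_G(v)$. I would then prove $H = G$ by induction on $\rk G$, the engine being a reduction along proper nontrivial $V$-invariant definable connected normal subgroups, with two base cases: $G$ abelian and $G$ (quasi)simple. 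The $K$-group hypothesis enters only to guarantee that the simple sections are Chevalley groups over algebraically closed fields, so that the semisimple part carries an algebraic structure; odd type forces those fields to have characteristic $\ne 2$.

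The abelian base case is the clean core, so I would do it first. Writing $V^{\#} = \{a,b,ab\}$ and regarding the connected abelian group $G$ additively as a $\Z[V]$-module, the key is the ring identity
\[
2 \;=\; (1+a) + (1+b) + (1+ab) \;-\; (1+a)(1+b)
\]
in $\Z[V]$. Each operator $1+a,\,1+b,\,1+ab$ maps $G$ into $C_G(a),\,C_G(b),\,C_G(ab)$ respectively (since $(a-1)(1+a) = a^2-1 = 0$, etc.), and the cross term $(1+a)(1+b)$ maps $G$ into $C_G(a)\cap C_G(b)$; as these are definable endomorphisms of the connected group $G$, their images are connected and therefore land in $C^{\circ}_G(a)$, $C^{\circ}_G(b)$, $C^{\circ}_G(ab)$, hence in $H$. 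Thus $2G \leqslant H$. Finally, a connected abelian group of odd type is $2$-divisible (its divisible part is $2$-divisible and, by odd type, the complementary bounded-exponent part has odd exponent), so $2G = G$ and $H = G$. This is precisely where ``odd type'' is used.

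For the inductive step I would assume $G$ is neither abelian nor simple and choose a proper nontrivial $V$-invariant definable connected normal subgroup $N$: the solvable radical $R(G)$ when $1 \neq R(G) \neq G$; a term of the derived series when $G$ is solvable nonabelian; and, in the semisimple case $G = G_1 * \cdots * G_k$, the central product of the factors in a single $V$-orbit whenever $V$ fails to permute the components transitively. By induction $N = \langle C^{\circ}_N(v)\mid v\in V^\#\rangle \leqslant H$ and $G/N = \langle C^{\circ}_{G/N}(v)\mid v\in V^\#\rangle$. To deduce $HN = G$, and hence $H = G$ since $N \leqslant H$, I need the image of $C^{\circ}_G(v)$ in $G/N$ to cover $C^{\circ}_{G/N}(v)$. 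This centralizer-lifting step is the first delicate point: the action of the $2$-group $V$ on the odd-type group $G$ is \emph{not} coprime, and the $2$-torus inside $N$ can obstruct the lifting of fixed points. I would establish it using $2$-divisibility of the connected group together with direct control of the $2$-torus, in parallel with the abelian computation above.

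The remaining base case, $G$ (quasi)simple, is where I expect the real difficulty; I would first reduce the quasisimple case to the simple one via the finite, $V$-invariant centre. So let $G$ be simple algebraic over an algebraically closed field of characteristic $\ne 2$. If $V$ acts by \emph{inner} automorphisms, its three involutions are commuting semisimple elements and hence lie in a common maximal torus $T$; for each root $\alpha$ the restriction $\alpha|_V$ is a character of $V$, and if it is nontrivial its kernel has index $2$ and so contains some $v \in V^{\#}$, whence $U_{\alpha} \leqslant C^{\circ}_G(v) \leqslant H$, while also $T \leqslant C^{\circ}_G(v) \leqslant H$; since $G = \langle T, U_{\alpha} \mid \alpha \in \Phi \rangle$ this yields $H = G$. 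The genuine obstacle is the case of \emph{outer} (diagonal, graph, or field) involutions, together with the case in which $V$ permutes several simple components transitively: there one must invoke the structure of fixed-point subgroups of involutory automorphisms of simple algebraic groups (Steinberg's theory), verify case by case that the resulting large reductive subgroups generate $G$, and handle the diagonal centralizers arising from transitively permuted components. Combining these base cases with the inductive reduction then gives the theorem.
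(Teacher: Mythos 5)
The paper does not actually prove this statement: it is imported verbatim as a Fact, citing Theorem 5.14 of \cite{Bo95}, so there is no in-paper argument to compare yours against. Your outline does follow the strategy of the published proofs (induction on rank via $V$-invariant connected normal subgroups, the $\Z[V]$-identity $2=(1+a)+(1+b)+(1+ab)-(1+a)(1+b)$ together with $2$-divisibility in the abelian case, and the reduction of the semisimple case to algebraic groups via the $K$-group hypothesis), and the abelian base case as you present it is complete and correct. But as a proof the proposal has one concretely false step and two acknowledged black boxes, so it cannot be accepted as written.

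The false step is in the ``inner'' quasisimple case: three commuting semisimple involutions of a simple algebraic group in characteristic $\neq 2$ need \emph{not} lie in a common maximal torus unless the group is simply connected (so that centralizers of semisimple elements are connected). Already in $\mathrm{PGL}_2$ every maximal torus is one-dimensional and contains a single involution, yet $\mathrm{PGL}_2$ contains four-subgroups (images of the quaternion configuration in $\mathrm{SL}_2$); for such a $V$ your root-subgroup argument has no torus to work with, and note that your own reduction ``quasisimple to simple via the finite centre'' pushes you into exactly these adjoint groups where non-toral four-groups live. The conclusion is still true there --- the three $C^\circ_G(v)$ are three distinct maximal tori generating $\mathrm{PGL}_2$ --- but it needs a different argument, and in higher rank the non-toral case genuinely requires the case analysis you only gesture at. The two remaining gaps you at least name honestly: the lifting of $C^\circ_{G/N}(v)$ to $C^\circ_G(v)N/N$ when the normal subgroup $N$ contains $2$-tori (the action is not coprime, so this is not formal), and the treatment of outer (graph, diagonal, field) involutions and of components permuted by $V$, which is where most of the content of Theorem 5.14 of \cite{Bo95} actually resides. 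Until those three points are supplied, this is a correct plan rather than a proof.
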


Our next lemma is an easy consequence of Fact~\ref{Kgrp_gen}.

\begin{lemma}\label{stremb_CA}
Let $G$ be a simple group of finite Morley rank and odd type; assume, in addition, that centralisers of involutions in $G$ are $K$-groups.
Let $S$ be a Sylow $2$-subgroup of $G$ and let $M = \Gamma_{S,2}(G)$
be the $2$-generated core associated with $S$.
Let $A$ be an elementary abelian $2$-subgroup of $M$ with $m_2(A) \geqs 3$.
Then $C^\o_G(a) \leqs M$ for any $a\in A^\#$.
\end{lemma}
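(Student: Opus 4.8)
The plan is to fix an involution $a\in A^\#$, set $H=C^\o_G(a)$, and deduce $H\leqs M$ by generating $H$ from centralisers of involutions that visibly lie in $M$. First I would record the structural features of $H$ that make the generation principle available. Since $a$ is an involution and centralisers of involutions in $G$ are $K$-groups by hypothesis, $C_G(a)$ is a $K$-group, and hence so is its definable connected subgroup $H=C^\o_G(a)$. As $a\in Z(C_G(a))$, the group $H$ carries $2$-torsion; and being a definable connected subgroup of the odd type group $G$, its Sylow $2$-subgroup is a $2$-torus, so $H$ is itself of odd type. Thus $H$ is a connected $K$-group of finite Morley rank and odd type, precisely the setting of Fact~\ref{Kgrp_gen}.

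Next I would arrange that $A$ lies inside the fixed $S$. Because $G$ is of odd type and contains the elementary abelian subgroup $A$ with $m_2(A)\geqs 3$, we have $\pr_2(S)\geqs 2$, so the subgroup $\Omega\leqs S^\o$ of elements of order at most $2$ is a characteristic elementary abelian subgroup of $S$ with $m_2(\Omega)\geqs 2$; hence $S\leqs N_G(\Omega)\leqs M$ and $S$ is a Sylow $2$-subgroup of $M$. Since $A\leqs M$, conjugacy of Sylow $2$-subgroups in $M$ (Fact~\ref{Sylow} applied to the definable group $M$) gives $m\in M$ with $A^{m}\leqs S$. Replacing $A,a$ by $A^{m},a^{m}$ and using $C^\o_G(a)^{m}=C^\o_G(a^{m})$ together with $M^{m}=M$, I may assume from now on that $A\leqs S$.

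The heart of the argument is then a single application of the generation principle. Using $m_2(A)\geqs 3$, write $A=\langle a\rangle\oplus C$ with $m_2(C)\geqs 2$ and pick a four-subgroup $V\leqs C$, so in particular $a\notin V$. Since $A$ is abelian, $V\leqs C_G(a)$ normalises the connected component $H=C^\o_G(a)$, so $V$ acts definably on $H$ by conjugation, and Fact~\ref{Kgrp_gen} yields $H=\langle C^\o_H(v)\mid v\in V^\#\rangle$. For each $v\in V^\#$ the element $v$ is an involution with $v\notin\langle a\rangle$, so $U=\langle a,v\rangle$ is a four-subgroup of $A\leqs S$; consequently $C^\o_H(v)\leqs C_G(a)\cap C_G(v)=C_G(U)\leqs N_G(U)\leqs M$ by the definition of the $2$-generated core. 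Every generator of $H$ therefore lies in $M$, whence $H=C^\o_G(a)\leqs M$, as required.

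The one point needing genuine care — and the only place the hypotheses do real work — is the preliminary package: the reduction to $A\leqs S$ (via $S\leqs M$ and Sylow conjugacy in $M$) and the verification that $H$ is a connected $K$-group of odd type. These are exactly what license Fact~\ref{Kgrp_gen} and the identification of each $\langle a,v\rangle$ as a four-subgroup sitting inside $S$; once they are in place the generation principle supplies the conclusion directly, with no case analysis required.
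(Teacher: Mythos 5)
Your proof is correct and follows essentially the same route as the paper's: split off a four-subgroup of $A$ avoiding $a$, apply the generation principle (Fact~\ref{Kgrp_gen}) to the connected component of the $K$-group $C_G(a)$, and observe that each generator $C^\o(v)$ lies in $C_G(\langle a,v\rangle)\leqs N_G(\langle a,v\rangle)\leqs M$. The only difference is that you explicitly carry out the reduction to $A\leqs S$ (needed so that each $\langle a,v\rangle$ is literally one of the four-subgroups of $S$ appearing in the definition of $\Gamma_{S,2}(G)$), a point the paper leaves implicit because in its application $A$ already sits inside $S$.
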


\begin{proof}
Let $K= C_G(a)$, then $A \leqslant K$ and $K$ is a $K$-group of odd type. Let $A_1$ be a four-subgroup of $A$ disjoint from $\gen{a}$.
By Fact~\ref{Kgrp_gen}, $K^\o = \gen{C_K^\o(x)\mid x\in A_1^\#}$.
Now $C_K^\o(x) \leqs C_G(a,x)$ and $\gen{a, x}$ is a four-subgroup
of $S$.  Thus $K^\o \leqs M$.
\end{proof}

\section{Strong Embedding}\label{sec:StrEmbDef}

$I(H)$ to denote the set of involutions of $H$.

We will apply the usual criteria for strong embedding:

\begin{fact}[Theorem 10.19 of \cite{BN}]\label{strembdef} 
Let $G$ be a group of finite Morley rank with a proper definable
subgroup $M$.  Then the following are equivalent:
\begin{enumerate}
\item $M$ is a strongly embedded subgroup.
\item $I(M) \neq \emptyset$, $C_G(i) \leqs M$ for every $i \in I(M)$,
      and $N_G(S) \leqs M$ for every Sylow $2$-subgroup of $M$.
\item $I(M) \neq \emptyset$ and $N_G(S) \leqs M$ for every non-trivial
      $2$-subgroup $S$ of $M$.
\end{enumerate}
\end{fact}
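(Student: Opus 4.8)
The plan is to prove the three conditions equivalent by the cycle $(1)\Rightarrow(3)\Rightarrow(2)\Rightarrow(1)$, where only the last implication carries real content. The implication $(1)\Rightarrow(3)$ is immediate from the definition: if $S\leqs M$ is any non-trivial $2$-subgroup and $g\in N_G(S)$, then $S=S^g\leqs M\cap M^g$, and since every non-trivial $2$-group of finite Morley rank contains an involution, $M\cap M^g$ meets $I(M)$, forcing $g\in M$; thus $N_G(S)\leqs M$. For $(3)\Rightarrow(2)$ I would apply $(3)$ to the subgroup $\gen{i}$ for an involution $i\in M$: as $\gen i\cong\Z/2\Z$ has no non-trivial automorphism, $C_G(i)=N_G(\gen i)\leqs M$, and the Sylow clause of $(2)$ is the special case of $(3)$ in which $S$ is a Sylow $2$-subgroup of $M$. (The same one-line arguments also give $(1)\Rightarrow(2)$ directly, a convenient sanity check.)

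The substance is $(2)\Rightarrow(1)$, which I would prove by contradiction. Suppose $g\in G\setminus M$ is such that $M\cap M^g$ contains an involution $i$. Applying the centraliser clause of $(2)$ to the involution $i\in M$ gives $C_G(i)\leqs M$, and applying it to the involution $i^{g\inv}\in M$ gives $C_G(i)^{g\inv}\leqs M$, that is $C_G(i)\leqs M^g$. Hence $C_G(i)\leqs D:=M\cap M^g$. The goal is now to deduce $g\in M$.

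The key step is to promote $i$ to a \emph{$2$-central} involution still lying in $D$. I would pick $R\in\mathrm{Syl}_2(C_G(i))$, noting $i\in R$ since $i$ is central in $C_G(i)$, and extend $R$ to a Sylow $2$-subgroup $\hat S$ of $G$. Its centre $Z(\hat S)$ contains an involution $z$: by the structure theorem for Sylow $2$-subgroups quoted above, $\hat S^\circ$ is connected nilpotent and hence has non-trivial centre containing an elementary abelian $2$-group, on which the finite group $\hat S/\hat S^\circ$ acts, and a finite $2$-group acting on a non-trivial $\F_2$-space has a non-zero fixed point. Since $i\in R\leqs\hat S$, the central involution $z$ centralises $i$, so $z\in C_G(i)\leqs D$; thus $z$ is a $2$-central involution lying in $M\cap M^g$. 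Being central in $\hat S$, we have $\hat S\leqs C_G(z)$; applying $(2)$ to $z\in M$ and to $z^{g\inv}\in M$ as before yields $C_G(z)\leqs M$ and $C_G(z)\leqs M^g$, so $\hat S\leqs D$. A maximal $2$-subgroup of $G$ contained in $M$ (resp.\ $M^g$) is a Sylow $2$-subgroup of $M$ (resp.\ $M^g$), so $\hat S$ is simultaneously Sylow in $M$ and in $M^g$. Writing $\hat S=P^g$ with $P\in\mathrm{Syl}_2(M)$ and using conjugacy of Sylow $2$-subgroups in $M$ (Fact~\ref{Sylow}) to find $m\in M$ with $P=\hat S^m$, I obtain $\hat S=\hat S^{mg}$, whence $mg\in N_G(\hat S)\leqs M$ by the Sylow clause of $(2)$, and therefore $g\in M$ — the desired contradiction.

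I expect the main obstacle to be exactly the manufacture of the $2$-central involution $z$ inside $D$, together with its structural prerequisite that a Sylow $2$-subgroup of a group of finite Morley rank has centre containing an involution; this is where the Sylow structure theory and the fixed-point property of finite $2$-groups on $\F_2$-spaces enter, and where I would have to be careful about the possibly disconnected quotient $\hat S/\hat S^\circ$. Once $z$ is available the argument is purely formal, resting only on conjugacy of Sylow $2$-subgroups (Fact~\ref{Sylow}) and the two clauses of $(2)$; in particular no appeal to the Frattini argument is required.
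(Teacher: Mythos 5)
The paper offers no proof of this statement: it is imported verbatim as Fact~\ref{strembdef} from Theorem~10.19 of \cite{BN}, so there is no in-paper argument to compare yours against. On its own terms your proof is correct and is essentially the standard argument for the strong embedding criterion (the same one used in \cite{BN} and in its finite-group-theoretic ancestors): the cheap implications $(1)\Rightarrow(3)\Rightarrow(2)$ as you give them, and for $(2)\Rightarrow(1)$ the promotion of an involution $i\in M\cap M^g$ to a $2$-central involution $z\in C_G(i)\leqs M\cap M^g$, forcing a full Sylow $2$-subgroup $\hat S$ of $G$ into $M\cap M^g$, followed by Sylow conjugacy in $M$ and the normalizer clause to conclude $g\in M$. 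All the individual steps check out: $N_G(\gen i)=C_G(i)$, $C_G(i)\leqs M^g$ via $i^{g\inv}\in M$, maximal $2$-subgroups of $G$ inside $M$ being Sylow in $M$, and conjugacy of Sylow $2$-subgroups of the definable subgroup $M$ via Fact~\ref{Sylow}.

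The one place you should tighten the write-up is the production of $z$: your argument that $Z(\hat S)$ contains an involution silently assumes $\hat S^\circ\neq 1$ when you invoke the structure theorem and the non-triviality of the centre of a connected nilpotent group. Nothing in the hypotheses of the Fact rules out a finite Sylow $2$-subgroup (the statement is for arbitrary groups of finite Morley rank, not just the odd-type setting in which the paper applies it), so you need the degenerate case as well; it is harmless, since a non-trivial finite $2$-group has non-trivial centre by the class equation, and $Z(\hat S)$ is then a non-trivial $2$-group and so contains an involution. With that case added, and with your (correct) reduction of the fixed-point argument for $\hat S/\hat S^\circ$ on $\Omega_1(Z(\hat S^\circ))$ to a finite invariant subspace, the proof is complete.
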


\section{Proof of the Strong Embedding Theorem}\label{sec:StrEmb}

Let $G$ be a simple group of finite Morley rank and odd type with
 Pr\"{u}fer $2$-rank $\geqslant 3$.  Let $S$ be a Sylow
$2$-subgroup of $G$ and $T = S^\circ$ a maximal $2$-torus in $G$.  Suppose that $G$ has a proper $2$-generated core
$M = \Gamma_{S,2}(G) < G$. Assume in addition that every proper definable subgroup of $G$ which contains $T$ is a $K$-group.

\begin{Claim}\label{2cK}
Let $t$ be a $2$-element in $G$. Then $t$ belongs to a $2$-torus and  $C_G(t)$ is a $K$-group.
\end{Claim}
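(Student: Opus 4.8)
The plan is to derive both assertions from the Burdges--Cherlin torality theorem (Fact~\ref{torsion}), the conjugacy of Sylow $2$-subgroups (Fact~\ref{Sylow}), and the standing hypothesis on definable subgroups containing $T$. Throughout I use that $G$, being infinite and simple, is connected, and that $Z(G) = 1$; I also take $t \neq 1$, since for $t = 1$ one has $C_G(t) = G$, which is not a $K$-group. The rough shape is: first show $t$ lies in some $2$-torus, then conjugate that torus into the maximal torus $T$ so as to bring $t$ inside $T$, and finally read off that $C_G(t)$ is a $K$-group from the hypothesis, since a suitable conjugate of $C_G(t)$ contains $T$ and is proper.

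To invoke Fact~\ref{torsion} I must first check that $C_G(t)$ is of odd type. A Sylow $2$-subgroup $S_0$ of $C_G(t)$ is a $2$-subgroup of $G$, hence lies in some Sylow $2$-subgroup $S^g$ of $G$, so that $S_0^\circ \leqslant (S^g)^\circ = T^g$, a $2$-torus. Writing $S_0^\circ = B_0 * T_0$ as a central product of a connected bounded-exponent subgroup $B_0$ and a $2$-torus $T_0$ (the structure recalled at the start of Section~\ref{sec:Sylow}), I note that $B_0$ is a connected subgroup of bounded exponent inside the divisible abelian $2$-group $T^g$ and is therefore finite, hence trivial. Thus the Sylow $2$-subgroup of $C_G(t)$ carries no $2$-unipotent part, which together with the presence of the nontrivial $2$-element $t$ makes $C_G(t)$ of odd type; this inheritance is the step needing the most care, since the whole argument rests on excluding a $2$-unipotent contribution to the centraliser. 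Fact~\ref{torsion}, applied to the connected group $G$ and the $2$-element $t$, now yields a $2$-torus $R$ with $t \in R$, which is the first assertion.

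For the second assertion I would argue as follows. The $2$-torus $R$ is a connected $2$-subgroup, so it lies in the connected component of some Sylow $2$-subgroup of $G$, that is, in a maximal $2$-torus; by Fact~\ref{Sylow} all maximal $2$-tori are conjugate to $T$, so $R \leqslant T^g$ for some $g \in G$ and hence $s := t^{g^{-1}} \in T$. Since $T$ is abelian and $s \in T$, we have $T \leqslant C_G(s)$, so $C_G(s)$ is a definable subgroup containing $T$; it is proper, for otherwise $s \in Z(G) = 1$, against $s \neq 1$. By the standing hypothesis every proper definable subgroup of $G$ containing $T$ is a $K$-group, so $C_G(s)$ is a $K$-group. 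Finally $C_G(t) = C_G(s)^g$ is a conjugate of $C_G(s)$, and being a $K$-group is preserved under the isomorphism induced by conjugation, so $C_G(t)$ is a $K$-group as well, completing the proof.
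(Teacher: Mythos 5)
Your argument is correct and follows essentially the same route as the paper's own verification: apply Fact~\ref{torsion} to place $t$ in a $2$-torus, conjugate that torus into $T$ via Fact~\ref{Sylow}, and observe that the corresponding conjugate of $C_G(t)$ is then a proper definable subgroup containing $T$, hence a $K$-group by hypothesis, a property preserved under conjugation. The only real difference is that you also check the odd-type hypothesis of Fact~\ref{torsion}, which the paper leaves implicit; there your justification that the $2$-torus part of a Sylow $2$-subgroup of $C_G(t)$ is nontrivial is slightly quick (a nontrivial $2$-element alone does not produce a nontrivial $2$-torus --- one should note that $t$ normalizes, hence centralizes an infinite part of, the maximal $2$-torus of a Sylow $2$-subgroup of $G$ containing it), but this is standard and the rest of the argument is sound.
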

\begin{verification} By Fact~\ref{torsion}, $t$ belongs to a maximal $2$-torus $R$ of $G$. Since by Sylow's Theorem $R^g = T$ for some $g\in G$, we see that $T\geqslant C_G(t)^g$ and therefore $C_G(t)^g$ and hence $C_G(t)$ is a $K$-group.
\end{verification}

Denote by $E$ the subgroup of $T$ generated by all involutions in $T$; then $E \normal S$ and $E$ is an elementary abelian 2-group with $m_2(E)\geq3$.

\begin{Claim}\label{stremb_E1}
For every $i\in I(S)$, $C_E(i)$ contains a four-group.
\end{Claim}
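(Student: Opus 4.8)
The plan is to exploit the fact that $E$ is elementary abelian, so that it is naturally an $\F_2$-vector space of dimension $r = m_2(E) \geqslant 3$, on which $i$ acts linearly by conjugation. First I would note that, because $E \normal S$, conjugation by the involution $i$ induces an $\F_2$-linear map $\phi$ on $E$ with $\phi^2 = 1$; its fixed space is exactly $C_E(i)$. Since any subgroup of $E$ is again an $\F_2$-subspace, a four-group inside $C_E(i)$ is precisely a two-dimensional subspace, so the claim reduces to the inequality $\dim_{\F_2} C_E(i) \geqslant 2$.

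The key step is to pass to the commutator map $N = \phi - 1$ (equivalently $e \mapsto [e,i]$, written additively), whose kernel is $\ker N = C_E(i)$ and whose image is $[E,i]$. Over $\F_2$ one computes $N^2 = \phi^2 - 2\phi + 1 = \phi^2 + 1 = 0$, since $2\phi = 0$ and $\phi^2 = 1$; hence $N$ is nilpotent of index at most $2$ and $\operatorname{im} N \subseteq \ker N = C_E(i)$. Feeding this containment into the rank--nullity identity $\dim_{\F_2} E = \dim_{\F_2}\ker N + \dim_{\F_2}\operatorname{im} N$ gives $\dim_{\F_2} E \leqslant 2\dim_{\F_2} C_E(i)$, and therefore $\dim_{\F_2} C_E(i) \geqslant r/2 \geqslant 3/2$. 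As this dimension is an integer, $\dim_{\F_2} C_E(i) \geqslant 2$, which yields the required four-group.

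I do not expect a genuine obstacle: the whole argument is elementary linear algebra over $\F_2$, the one essential observation being that the commutator map squares to zero so that its image lands in its kernel. The hypothesis $\pr_2(G) \geqslant 3$ (equivalently $m_2(E) \geqslant 3$) enters only at the final arithmetic step, where it pushes the fixed-space dimension strictly past $1$. The degenerate case in which $i$ already centralises $E$ is subsumed automatically, since then $N = 0$ and $C_E(i) = E$ has dimension $r \geqslant 3$.
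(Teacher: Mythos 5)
Your proof is correct and follows essentially the same route as the paper: both arguments reduce to the observation that the commutator map $N=\phi-1$ on the $\F_2$-vector space $E$ satisfies $N^2=0$, which the paper packages as ``the Jordan form of $i$ has no block of size $\geqslant 3$, hence at least two blocks and two eigenvectors with eigenvalue $1$'' and you package as $\operatorname{im}N\subseteq\ker N$ together with rank--nullity. If anything your version is slightly more self-contained, since it makes explicit the nilpotency fact on which the paper's Jordan-block count silently relies.
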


\begin{verification}
Since $E$ is normal in $S$, the involution $i$ induces a linear transformation
of the $\F_2$-vector space $E$.  Since $m_2(E) > 2$, the Jordan canonical form
of $i$ cannot consist of a single block, so there are at least two eigenvectors.
Since the eigenvalues associated to these eigenvectors must have order 2, the
eigenvalues must both be 1, as desired.
\end{verification}

\begin{Claim}\label{stremb_CconM}
$C^\o_G(i) \leq M$ for every $i\in I(M)$.
\end{Claim}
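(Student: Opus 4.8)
The plan is to reduce to the case $i \in I(S)$ and then to exhibit an elementary abelian $2$-subgroup $A \leq M$ with $i \in A^\#$ and $m_2(A) \geq 3$, at which point Lemma~\ref{stremb_CA} applies verbatim (its standing hypothesis that involution centralisers are $K$-groups is supplied by Claim~\ref{2cK}) to give $C^\o_G(i) \leq M$.

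For the reduction, I would first note that $S$ is a Sylow $2$-subgroup of $M$: since $E \normal S$ and $m_2(E) \geq 3 \geq 2$, the definition of the $2$-generated core gives $S \leq N_G(E) \leq M$, and a Sylow $2$-subgroup of $G$ lying inside $M$ is Sylow in $M$. By conjugacy of Sylow $2$-subgroups in $M$ (Fact~\ref{Sylow} applied to the definable group $M$), every $i \in I(M)$ is $M$-conjugate to some $j \in I(S)$, say $j = i^m$ with $m \in M$. As $C^\o_G(i^m) = C^\o_G(i)^m$ and $M^{m} = M$, proving $C^\o_G(j) \leq M$ yields $C^\o_G(i) \leq M$; so I may assume $i \in I(S)$.

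With $i \in I(S)$, since $E \normal S$ the involution $i$ acts on $E$, and Claim~\ref{stremb_E1} furnishes a four-group $V \leq C_E(i)$. Then $\langle i \rangle V \leq M$ is elementary abelian, as $i$ centralises $V \leq E \leq M$. If $i \notin E$ then $i \notin V$, so $A := \langle i \rangle V$ has $m_2(A) = 3$; if instead $i \in E$, then $E$ itself contains a rank-$3$ elementary abelian subgroup $A$ through $i$ because $m_2(E) \geq 3$. In either case $A \leq M$ is elementary abelian with $m_2(A) \geq 3$ and $i \in A^\#$, and Lemma~\ref{stremb_CA} finishes the claim.

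I expect the only delicate point to be the rank bookkeeping when assembling $A$: one must guarantee that $m_2(A)$ does not collapse to $2$, which is precisely why the split into $i \notin E$ versus $i \in E$, together with the standing inequality $m_2(E) \geq 3$, is needed. The reduction step also tacitly uses that $M$ is a definable subgroup normalised by its own elements and that $S$ is Sylow in $M$, both of which rest on $E \normal S$ and the definition of $\Gamma_{S,2}(G)$.
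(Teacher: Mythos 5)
Your proof is correct and follows essentially the same route as the paper: reduce to $i\in I(S)$ by Sylow conjugacy, use Claim~\ref{stremb_E1} to produce a four-group in $C_E(i)$, split into the cases $i\in E$ and $i\notin E$ to assemble a rank-$\geq 3$ elementary abelian subgroup through $i$, and invoke Lemma~\ref{stremb_CA}. You simply spell out two details the paper leaves implicit, namely that $S\leq N_G(E)\leq M$ makes $S$ a Sylow $2$-subgroup of $M$ (justifying the reduction) and that Claim~\ref{2cK} supplies the $K$-group hypothesis of Lemma~\ref{stremb_CA}.
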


\begin{verification}
We may assume that $i\in I(S)$ by Fact~\ref{Sylow}.
By Claim~\ref{stremb_E1}, there is a four-group $E_1 \leq E$ centralized
by $i$.  Thus either $E$ or $\langle E_1,i \rangle$ is an elementary
abelian $2$-group of rank at least three which contains $i$.
By Fact~\ref{stremb_CA}, $C^\o_G(i) \leq M$.
\end{verification}

\begin{Claim}\label{TinM}
Let $R$ be a maximal $2$-torus in $M$. Then $N_G(R)\leqslant M$.
\end{Claim}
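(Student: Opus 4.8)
The plan is to reduce the arbitrary maximal $2$-torus $R$ to the distinguished one $T = S^\circ$ by conjugacy inside $M$, and then to exploit that $E$, being generated by the involutions of $T$, is characteristic in $T$ and feeds directly into the definition of the $2$-generated core.

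First I would record that $T \leqslant M$. Since $E \normal S$ and $m_2(E) \geqslant 3$, the normaliser $N_G(E)$ is one of the subgroups generating $\Gamma_{S,2}(G)$, so $S \leqslant N_G(E) \leqslant M$ and hence $T = S^\circ \leqslant M$. Moreover $T$ is a \emph{maximal} $2$-torus of $M$: any $2$-torus of $M$ containing $T$ is a $2$-torus of $G$ containing the maximal $2$-torus $T$, hence equals $T$.

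Next I would invoke conjugacy of maximal $2$-tori inside $M$. By Claim~\ref{2cK} every $2$-element of $M$ lies in a $2$-torus, so $M$ has no $2$-unipotent part and its maximal $2$-tori are exactly the connected components of its Sylow $2$-subgroups; these are conjugate by Fact~\ref{Sylow}. Applying this to $R$ and $T$ yields $R = T^m$ for some $m \in M$, and therefore $N_G(R) = N_G(T)^m$. As $m \in M$ and $M$ is a subgroup, it now suffices to prove $N_G(T) \leqslant M$, for then $N_G(R) = N_G(T)^m \leqslant M^m = M$.

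Finally, $E = \langle I(T) \rangle$ is characteristic in $T$, since any $g \in N_G(T)$ permutes the involutions of $T$ and so normalises $E$; thus $N_G(T) \leqslant N_G(E)$. Combined with $N_G(E) \leqslant M$ noted above, this gives $N_G(T) \leqslant M$ and completes the argument. The only step needing genuine care is the conjugacy of maximal $2$-tori in $M$, i.e.\ checking that $M$ behaves like an odd type group for Sylow purposes; once $T \leqslant M$ is seen to be maximal and the reduction $R = T^m$ is in hand, the characteristicity of $E$ together with the definition of $\Gamma_{S,2}(G)$ finishes matters with essentially no further work.
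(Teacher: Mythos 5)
Your proof is correct and takes essentially the same route as the paper: the decisive point in both is that $E$ is characteristic in $T$, so $N_G(T)\leqslant N_G(E)\leqslant M$ by the very definition of the $2$-generated core. The paper's verification is a one-line version of this ($N_G(R)\leqslant N_G(E)\leqslant M$); your explicit reduction of an arbitrary maximal $2$-torus $R$ to $T$ by conjugacy inside $M$ supplies a step the paper leaves implicit, and is welcome rather than a divergence.
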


\begin{verification} Observe that
$N_G(R)\leqslant N_G(E) \leqslant M$ by definition of the proper $2$-generated core.
\end{verification}

\begin{Claim}\label{almost}
$C_G(i) \leqslant M$ for every involution $i \in M$.
\end{Claim}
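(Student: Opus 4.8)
The plan is to bootstrap from Claim~\ref{stremb_CconM}, which already gives $C^\o_G(i) \leqslant M$, to the full centralizer $C := C_G(i)$ by a Frattini argument. The idea is to locate a maximal $2$-torus $R$ of $C$ lying inside the connected component $C^\o := C^\o_G(i)$, write $C = N_C(R)\,C^\o$, and then absorb $N_C(R)$ into $M$ via the normaliser of a torus.

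First I would pin down $R$ and establish its maximality. Since $i$ is a $2$-element, Claim~\ref{2cK} places $i$ in some $2$-torus $T_0$ of $G$; as $T_0$ is abelian it centralises $i$, and being connected it lies in $C^\o$. Enlarging $T_0$ to a maximal $2$-torus $R$ of $C^\o$ and using that all such tori are $C^\o$-conjugate while $i$ is central in $C$, one obtains $i \in R$. Now any $2$-torus $R'$ of $G$ with $R \leqslant R'$ centralises $i$ (as $i \in R'$ and $R'$ is abelian), hence $R' \leqslant C$ and, being connected, $R' \leqslant C^\o$; maximality of $R$ in $C^\o$ forces $R' = R$. Thus $R$ is a maximal $2$-torus of $G$, and in particular a maximal $2$-torus of $M$ since $R \leqslant C^\o \leqslant M$.

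With $R$ in hand the argument closes quickly. The torus $R$ is the connected component of a Sylow $2$-subgroup of $C^\o$, hence characteristic in it, and $C^\o$ is a definable normal subgroup of $C$; so Fact~\ref{Sylow_frattini} yields $C = N_C(R)\,C^\o$. Because $R$ is a maximal $2$-torus of $M$, Claim~\ref{TinM} gives $N_G(R) \leqslant M$, whence $N_C(R) \leqslant M$; combined with $C^\o \leqslant M$ this gives $C_G(i) = C = N_C(R)\,C^\o \leqslant M$.

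The one genuinely load-bearing point, and the step I expect to need the most care, is the verification that $R$ is maximal in $G$ rather than merely in $C^\o$. This is what licenses the appeal to Claim~\ref{TinM}, and it hinges on $i$ being a central involution actually contained in $R$, so that any would-be larger torus is trapped inside $C^\o_G(i)$. Everything else is a routine Frattini argument together with facts already established.
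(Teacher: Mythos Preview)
Your proposal is correct and follows essentially the same line as the paper's verification: locate a maximal $2$-torus $R$ of $G$ inside $C^\o_G(i)$, invoke Claim~\ref{TinM} to get $N_G(R)\leqslant M$, and finish by the Frattini argument $C_G(i)=C^\o_G(i)\,N_{C_G(i)}(R)\leqslant M$. The only difference is that the paper obtains $R$ maximal in $G$ directly from Claim~\ref{2cK} (whose proof places $i$ in a maximal $2$-torus of $G$), so your detour---first taking $R$ maximal in $C^\o_G(i)$ and then arguing it is maximal in $G$---is unnecessary, though perfectly valid.
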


\begin{verification}
By Claim~\ref{2cK}, $C_G(i)$ contains a maximal $2$-torus $R$ of $G$. Obviously, $R \leqslant C^\circ_G(i) \leqslant M$ by Claim~\ref{stremb_CconM}. Now by Claim~\ref{TinM} $N_G(R) \leqslant M$. Applying the Frattini argument to $C_G(i)$, we see that
\[
C_G(i) = C^\circ_G(i)N_{C_G(I)}(R) \subseteq C^\circ_G(i)N_G(R) \leqslant M\cdot M = M.\]
\end{verification}

\begin{Claim}\label{last}
$N_G(S) \leqslant M$.
\end{Claim}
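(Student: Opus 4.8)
The plan is to reduce $N_G(S) \leqslant M$ to the single observation that the subgroup $E$ is \emph{characteristic} in $S$, after which the definition of the $2$-generated core closes the argument with no further work.

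First I would record that $T = S^\circ$ is characteristic in $S$: the connected component is preserved by every automorphism of $S$, and in particular by conjugation by any $g \in N_G(S)$, since $(S^\circ)^g = (S^g)^\circ = S^\circ$. An automorphism of $S$ fixing $T$ setwise permutes the involutions of $T$ among themselves, so it also fixes the subgroup $E = \gen{I(T)}$ that they generate. Hence $E$ is characteristic in $S$. This upgrades the relation $E \normal S$ noted earlier — which only gives $S \leqslant N_G(E)$ — to the stronger statement that \emph{every} element of $N_G(S)$ normalizes $E$, that is, $N_G(S) \leqslant N_G(E)$.

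Next I would invoke the definition of the $2$-generated core directly. Since $E$ is an elementary abelian $2$-subgroup of $S$ with $m_2(E) \geqslant 3 \geqslant 2$, the normalizer $N_G(E)$ is one of the subgroups whose definable hull constitutes $M$, so $N_G(E) \leqslant M$. Chaining the two inclusions yields $N_G(S) \leqslant N_G(E) \leqslant M$, as required. The same computation incidentally shows $S \leqslant N_G(E) \leqslant M$, so that the Sylow $2$-subgroup $S$ of $G$ actually lies in, and is therefore a Sylow $2$-subgroup of, $M$; together with conjugacy of Sylow $2$-subgroups (Fact~\ref{Sylow}) this is what lets the present computation feed into the Sylow clause of Fact~\ref{strembdef} for \emph{all} Sylow $2$-subgroups of $M$, not merely for $S$.

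There is no substantial obstacle here beyond recognizing that normality of $E$ in $S$ must be promoted to characteristicity: it is the latter, not the former, that allows elements of $N_G(S)$ — and not just elements of $S$ — to be pushed into $N_G(E)$. Everything else is a direct appeal to the definitions, so this is the only point at which any care is needed.
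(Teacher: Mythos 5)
Your proof is correct and follows essentially the same route as the paper: the paper writes $N_G(S)\leqslant N_G(T)\leqslant M$, where the second inclusion (Claim~\ref{TinM}) is itself obtained from $N_G(T)\leqslant N_G(E)\leqslant M$, so both arguments ultimately rest on $T=S^\circ$ and $E=\gen{I(T)}$ being preserved by anything normalizing $S$, plus the definition of the $2$-generated core. Your version merely inlines the intermediate step through $N_G(T)$.
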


\begin{verification} This follows from Claim~\ref{TinM} since
$N_G(S) \leqslant N_G(T) \leqslant M$.
\end{verification}

Claims~\ref{almost} and \ref{last} show that $M$ satisfies the criterion for being a strongly embedded subgroups (Fact~\ref{strembdef}) which completes the proof of the theorem.

\section{Proof of the Uniqueness Subgroup Theorem}

We start the proof by quoting a theorem on completeness of nilpotent signaliser functors.

\begin{theorem} \textbf{\emph{\cite[Theorem~B.30]{BN}}} Any
nilpotent signaliser functor $\theta$ on a group $G$ of finite Morley
rank is complete. \label{th:signfunct}
\end{theorem}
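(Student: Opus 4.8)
The plan is to prove this by the signaliser-functor machinery familiar from finite group theory, transported to the finite Morley rank category and considerably streamlined by the nilpotency hypothesis. Fix an elementary abelian $2$-subgroup $E \leqslant G$ with $|E| \geqslant 8$. The goal is to show that $W := \langle \theta(t) \mid t \in E^\# \rangle = \theta(E)$ is a connected definable subgroup without $2$-torsion satisfying the \emph{balance} identity $C_W(s) = \theta(s)$ for every $s \in E^\#$. I would organise the entire argument around one object: a \emph{nilpotent $\theta$-subgroup} (relative to $E$), by which I mean a definable connected nilpotent $E$-invariant subgroup $H$ without $2$-torsion such that $C_H(s) \leqslant \theta(s)$ for all $s \in E^\#$.

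First I would record the two elementary inputs. The first is that each $\theta(s)$ is itself a nilpotent $\theta$-subgroup: it is connected and contained in $O(C_G(s))$, hence $2$-torsion-free; it is nilpotent because $\theta$ is a nilpotent functor; it is normal in $C_G(s)$ and so invariant under the abelian group $E \leqslant C_G(s)$; and for $s' \in E^\#$ the functor equation gives $C_{\theta(s)}(s') = \theta(s) \cap C_G(s') = \theta(s') \cap C_G(s) \leqslant \theta(s')$, which is exactly the required balance. The second input is a coprime generation principle in the spirit of Fact~\ref{Kgrp_gen}: an elementary abelian $2$-group $E$ of rank $\geqslant 3$ acting definably on a connected $2$-torsion-free nilpotent group $H$ satisfies $H = \langle C_H(s) \mid s \in E^\# \rangle$, together with the coprime decompositions of $H$ relative to the four-subgroups of $E$. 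Applied to any nilpotent $\theta$-subgroup $H$, this yields $H = \langle C_H(s) \mid s \in E^\# \rangle \leqslant \langle \theta(s) \mid s \in E^\# \rangle = W$, so every nilpotent $\theta$-subgroup is contained in $W$.

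The heart of the proof is the \emph{closure} step: if $W_1$ and $W_2$ are nilpotent $\theta$-subgroups then so is $\langle W_1, W_2 \rangle$. Granting this, the family of nilpotent $\theta$-subgroups is closed under joins, and since Morley rank forbids infinite strictly ascending chains of connected definable subgroups there is a unique maximal nilpotent $\theta$-subgroup; by the previous paragraph it equals $W$. In particular $W$ is itself a nilpotent $\theta$-subgroup, hence connected, definable and without $2$-torsion. Because each $\theta(s)$ is a nilpotent $\theta$-subgroup we obtain $\theta(s) \leqslant W$, and since $\theta(s) \leqslant O(C_G(s)) \leqslant C_G(s)$ we get $\theta(s) \leqslant C_W(s) \leqslant \theta(s)$, which gives the balance identity $C_W(s) = \theta(s)$ and completes the verification of completeness.

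I expect the closure step to be the main obstacle, exactly as in the finite case, where for solvable functors it constitutes Goldschmidt's theorem. Concretely one must bound $C_{\langle W_1, W_2 \rangle}(s)$ and show it still lies inside $\theta(s)$, which is delicate because a product of nilpotent groups need not be nilpotent and because taking centralisers does not commute with forming joins. This is precisely where the nilpotency hypothesis is decisive: it replaces the hard solvable arguments by a Fitting-type analysis, decomposing the relevant connected nilpotent groups as central products of their Sylow and torus components (as in Fact~\ref{lm:2-connected}'s ambient structure theory) and treating the action component by component, and then using the rank-$\geqslant 3$ generation principle to reduce every centraliser computation on $\langle W_1, W_2 \rangle$ back to the local pieces $\theta(s)$, where balance is already known. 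The residual points — connectedness and definability of joins of connected definable subgroups, and the absence of $2$-torsion — are handled by the standard finite Morley rank toolbox, in particular Zilber's indecomposability theorem and the chain conditions on definable subgroups.
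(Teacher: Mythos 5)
The paper does not actually prove this statement: it is quoted verbatim from \cite{BN} (Theorem~B.30, the nilpotent signalizer functor theorem), so there is no internal proof to compare yours against. On its own terms, your plan reproduces the standard Goldschmidt-style architecture that the cited proof also follows: introduce the family of $E$-invariant, connected, $2$-torsion-free nilpotent subgroups $H$ with $C_H(s) \leqslant \theta(s)$ for all $s \in E^\#$, show that each $\theta(s)$ belongs to this family, show the family is closed under joins, and identify its unique maximal member with $\theta(E)$ via coprime generation. The steps you do carry out are correct: the balance identity $C_{\theta(s)}(s') = \theta(s) \cap C_G(s') = \theta(s') \cap C_G(s) \leqslant \theta(s')$ shows each $\theta(s)$ is a $\theta$-subgroup; generation puts every $\theta$-subgroup inside $W$; and the two inclusions $C_W(s) \leqslant \theta(s) \leqslant C_W(s)$ give completeness once $W$ is known to be a $\theta$-subgroup.

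The genuine gap is at exactly the point you flag and then do not fill: the closure step, that $\langle W_1, W_2 \rangle$ is again a nilpotent $\theta$-subgroup, is the entire mathematical content of the theorem, and your paragraph about it is a description of a hoped-for outcome rather than an argument. A join of two nilpotent groups need not be nilpotent, need not be $2$-torsion-free, and $C_{\langle W_1, W_2 \rangle}(s)$ is not a priori controlled by $C_{W_1}(s)$ and $C_{W_2}(s)$ --- ``reducing every centraliser computation back to the local pieces'' is precisely what must be proved, not a tool one may invoke. Every known proof (Goldschmidt's in the finite case, and its adaptation in \cite{BN}) gets past this by exploiting nilpotency structurally: decompose each $\theta(a)$ into its primary and divisible torsion-free components, verify that each component assignment (e.g.\ $a \mapsto O_p(\theta(a))$) is again a signalizer functor, and then run a commutator/transitivity argument prime by prime to show the corresponding completions are $p$-groups (respectively torsion-free nilpotent) that centralize one another; it is here, and not in the rank-$2$ coprime generation you invoke, that the hypothesis $m_2(E) \geqslant 3$ does its real work. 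As written, your proposal establishes only the easy reduction ``completeness follows from closure under joins'' and leaves the theorem itself unproved.
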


Now let $G$ be a group  of odd type of Pr\"ufer 2-rank at least $3$ and $S$ a Sylow $2$-subgroup in
$G$. Let $T$ be a maximal $2$-torus in $S$ and $E$ the subgroup generated by involutions in $T$; obviously, $E$ is
 a normal in $S$ elementary
abelian subgroup of order  at least 8. Let $\theta$ be a non-trivial nilpotent signaliser functor on $G$.
If $s \in S$ is
any involution and $\theta(s) \ne 1$, then let us take $D = C_E(s)$;
then $|D| \geqslant  4$.  Obviously $D$ normalizes $\theta(s)$ and $$
\theta(s) = \langle \theta(s) \cap C_t \,|\, t \in D^{\#} \rangle  \leqslant
\theta(E) $$ by Theorem~\ref{Kgrp_gen}.
So, if we assume that $\theta(s) \ne 1$ for some involution $s \in
P$, then we have $\theta(E)  \ne 1$.

Now we can state our main result about signaliser $2$-functors.

\begin{theorem}~~In the notation above, if a group $G$ of finite Morley rank and odd
type  has Pr\"{u}fer $2$-rank at least
$3$ and admits  a non-trivial nilpotent signaliser functor $\theta$,
then
$$
\Gamma_{S,2}(G) \leqslant  N_G(\theta(E)).
$$
In particular,  $G$ has a proper $2$-generated core.
 \label{th:sign2core}
\end{theorem}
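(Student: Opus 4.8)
The plan is to reduce the inclusion $\Gamma_{S,2}(G)\leqs N_G(\theta(E))$ to a statement about a single four-group and then to spread that statement across all of $S$ by means of $2$-connectedness. Since $\theta$ is complete by Theorem~\ref{th:signfunct}, the subgroup $\theta(E)$ is definable, connected and without $2$-torsion, so $M=N_G(\theta(E))$ is definable; and since $\Gamma_{S,2}(G)$ is the definable hull of the subgroup generated by the normalisers $N_G(U)$ with $U\leqs S$ elementary abelian and $m_2(U)\geqs 2$, it is enough to show $N_G(U)\leqs M$ for each such $U$. I would use throughout the equivariance $\theta(s^g)=\theta(s)^g$ of the functor; applied to the abelian group $E$ it already gives, for $g\in N_G(E)$, that $g$ permutes $E^\#$ and hence $\theta(E)^g=\langle\theta(t^g)\mid t\in E^\#\rangle=\theta(E)$, so that $S\leqs N_G(E)\leqs M$.

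The next step is a generation principle. For an elementary abelian $A$ of rank $\geqs 3$ and any four-subgroup $V\leqs A$, completeness gives $C_{\theta(A)}(v)=\theta(v)$ for $v\in A^\#$, while the four-group generation principle, applied to the connected $2'$-group $\theta(A)$ exactly as Fact~\ref{Kgrp_gen} was applied to $\theta(s)$ in the computation preceding the theorem, yields
\[
\theta(A)=\langle C_{\theta(A)}(v)\mid v\in V^\#\rangle=\langle\theta(v)\mid v\in V^\#\rangle .
\]
The right-hand side manifestly depends only on $V$; write $\Theta(V)$ for it. Thus $\Theta(V)=\theta(A)$ for every elementary abelian $A\geqs V$ of rank $\geqs 3$, and in particular $\Theta(V)=\theta(E)$ for each four-subgroup $V\leqs E$.

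To globalise this I would invoke $2$-connectedness. Given an arbitrary four-group $U\leqs S$, Fact~\ref{lm:2-connected} supplies a chain $V_1,\dots,V_n$ of four-subgroups of $S$ with $V_1\leqs E$, $V_n=U$ and $[V_j,V_{j+1}]=1$. Whenever $V_j\neq V_{j+1}$ the product $V_jV_{j+1}$ is elementary abelian of rank $\geqs 3$ and contains both factors, so $\Theta(V_j)=\theta(V_jV_{j+1})=\Theta(V_{j+1})$; the links with $V_j=V_{j+1}$ are trivial. Running along the chain gives $\Theta(U)=\Theta(V_1)=\theta(E)$. Now for $g\in N_G(U)$ equivariance and the fact that $g$ permutes $U^\#$ give $\theta(E)^g=\Theta(U)^g=\langle\theta(u^g)\mid u\in U^\#\rangle=\Theta(U)=\theta(E)$, so $N_G(U)\leqs M$; the subgroups $U$ of rank $\geqs 3$ are handled identically, using $\theta(U)=\theta(E)$ in place of $\Theta(U)=\theta(E)$. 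Hence $\Gamma_{S,2}(G)\leqs M$. Finally, the computation preceding the theorem shows $\theta(E)\neq 1$, and a nontrivial subgroup without $2$-torsion cannot be all of $G$ nor (by simplicity of $G$) normal in $G$, so $M<G$ and the $2$-generated core is proper.

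The main obstacle is the uniqueness assertion $\Theta(U)=\theta(E)$ of the third paragraph: it is precisely what forces $N_G(\theta(E))$ to absorb every normaliser, and it rests on two points to be verified with care. First, one must be sure that the four-group generation principle genuinely applies to a completion $\theta(A)$, which is a connected $2'$-group rather than an odd-type group, so it is invoked by analogy with the treatment of $\theta(s)$ in the preamble. Second, one must check that $2$-connectedness really lets each four-group be joined to $E$ through links whose two successive members span an elementary abelian group of rank $\geqs 3$, so that every identification $\Theta(V_j)=\Theta(V_{j+1})$ is legitimate. The equivariance of $\theta$ is used at each of these stages and should be stated explicitly as a property of the functor.
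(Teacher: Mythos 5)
Your proof is correct and follows essentially the same route as the paper: define $\Theta(V)=\langle\theta(v)\mid v\in V^{\#}\rangle$, show it is constant on commuting four-groups, propagate along a $2$-connectedness chain to identify it with $\theta(E)$, and conclude via equivariance of $\theta$. The only variation is local: where you obtain $\Theta(V_j)=\Theta(V_{j+1})$ by applying generation and completeness to $\theta(V_jV_{j+1})$, the paper instead applies the generation principle to each $\theta(v)$ together with the balance condition $\theta(v)\cap C_G(w)=\theta(w)\cap C_G(v)\leqslant\theta(w)$ --- both versions rest on the same (slightly informal) invocation of Fact~\ref{Kgrp_gen} for $2^{\perp}$-groups that you rightly flag, and on the equivariance property the paper likewise uses without stating.
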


Now application of the Strongly Embedding Theorem immediately yields the main result of this paper:

\begin{namedtheorem}{Uniqueness Subgroup Theorem} Let $G$ be a simple group of finite Morley rank and odd type with
Pr\"{u}fer $2$-rank $\geqslant 3$.  Let $S$ be a Sylow
$2$-subgroup of\/ $G$ and $D=S^\circ$ the maximal\/ $2$-torus in $S$. Let further $E$ be the subgroup of\/ $D$ generated by involutions and $\theta$ a nilpotent signaliser functor on $G$.

Assume in addition that every proper definable subgroup of\/ $G$ containing $T$ is a $K$-group.

Then $M = N_G(\theta(E))$ is a  strongly embedded subgroup in $G$.

\end{namedtheorem}

\paragraph{Proof of Theorem~{\rm \ref{th:sign2core}}.}
For an arbitrary  four-subgroup $W < S$ write
$$
\theta(W) = \langle \theta(w) \,|\, w \in W^{\#} \rangle.
$$
Notice that, if $V$ and $W$ are two commuting four-subgroups in $S$,
$[V,W]=1$, and $v \in V^{\#}$, then
$\theta(v)$ is $W$-invariant and, by Theorem~\ref{Kgrp_gen},
\begin{eqnarray*}
\theta(v) & = & \langle \theta(v) \cap C_w \,|\, w \in W^{\#} \rangle \\[.5ex]
& \leqslant  & \langle \theta(w) \,|\, w \in W^{\#} \rangle \\[.5ex]
&=& \theta(W).
\end{eqnarray*}
Therefore $\theta(V) \leqslant  \theta(W)$. Analogously $\theta(W) \leqslant
\theta(V)$ and $\theta(V) = \theta(W)$.

Recall that the Sylow
subgroup $S$ is $2$-connected by Lemma~\ref{lm:2-connected}; this means that if
$U$ is a four-subgroup from $E$ and $V$ is an arbitrary
four-subgroup in $S$ then there is a sequence of four-subgroups
$$
U=V_0, V_1, \ldots, V_n = V
$$
such that $[V_i, V_{i+1}] =1$ for $i = 0,1,\ldots,n-1$.
Therefore, by the previous paragraph, $\theta(V) = \theta(U)$.
But
\begin{eqnarray*}
\theta(U)   & = & \langle \theta(u) \,|\, u \in U^{\#} \rangle \\[.5ex]
& = & \langle C_{\theta(E)}(u) \,|\, u \in U^{\#} \rangle \\[.5ex]
& =& \theta(E),
\end{eqnarray*}
by Theorem~\ref{Kgrp_gen}. Therefore $\theta(V) = \theta(E)$
for any four-subgroup $V < S$.

If now $P \leqslant  S$ is a subgroup of $2$-rank at least $2$, $g \in N_G(P)$ and
$V \leqslant  P$ is a four-subgroup, we have
$$
\theta(E)^g = \theta(V)^g = \theta(V^g) = \theta(E)
$$
and $N_G(P) \leqslant  N_G(\theta(E))$. Therefore $\Gamma_{S,2}(G) \leqslant
N_G(\theta(E))$. \hfill $\Box$

\bigskip

\small


\begin{thebibliography}{99}

\bibitem{abc} T. Alt\i nel, A. V. Borovik, and G. Cherlin. {\bf Simple groups of finite Morley rank.} Mathematical Surveys and Monographs, 145. American Mathematical Society, Providence, RI, 2008. xx+556 pp.

\bibitem{ABAB} A. Berkman and A. Borovik, Groups of finite Morley rank with a pseudoreflection action, a preprint.

\bibitem{Bo95} A. Borovik,
Simple locally finite groups of finite Morley rank and odd
              type,
\textbf{Finite and locally finite groups (Istanbul, 1994)},
NATO Adv. Sci. Inst. Ser. C Math. Phys. Sci.,
vol.~471, Kluwer Acad. Publ.,
Dordrecht,
1995,
pp.~247--284.

\bibitem{bbn} A. Borovik, J. Burdges, and A. Nesin, Uniqueness cases in odd-type groups of finite Morley rank.  J. Lond. Math. Soc. (2)  77  (2008),  no. 1, 240--252.

\bibitem{borche} A. Borovik and G. Cherlin,
Permutation groups of finite Morley rank. Model theory with applications to algebra and analysis. Vol. 2, 59--124,
London Math. Soc. Lecture Note Ser., 350, Cambridge Univ. Press, Cambridge, 2008.



\bibitem{BN} A. Borovik and A. Nesin,  {\bf Groups of Finite Morley Rank.} Oxford Logic Guides, 26. Oxford Science Publications. The Clarendon Press, Oxford University Press, New York, 1994. xviii+409 pp.

\bibitem{BP} A. V. Borovik  and B. P. Poizat,
Tores et $p$-groupes, J. Symbolic Logic 55 no. 2 (1990, 478--491.

\bibitem{bc}  J. Burdges and G. Cherlin,  Semisimple torsion in groups of finite Morley rank. J. Math. Log. 9 no. 2 (2009),  183–-200.

\bibitem{deloro} A. Deloro,  Actions of groups of finite Morley rank on small abelian groups.  Bull. Symbolic Logic  15 no. 1 (2009),   70--90.



\end{thebibliography}
\end{document}